\newtheorem{theorem}{Theorem}[section]
\newtheorem{lemma}[theorem]{Lemma}
\theoremstyle{definition}
\newtheorem{definition}[theorem]{Definition}
\newtheorem{proposition}[theorem]{Claim}
\newtheorem{assumption}[theorem]{Assumption}
\newtheorem{example}[theorem]{Example}
\newtheorem{conjecture}[theorem]{Hypothesis}
\newtheorem{note}[theorem]{Note}
\DeclareRobustCommand{\divby}{%
  \mathrel{\text{ \vbox{\baselineskip.65ex\lineskiplimit0pt\hbox{.}\hbox{.}\hbox{.}} }}%
}
\begin{document}
\author{Lev Soukhanov \\ \tiny{National Research University Higher School of Economics}}
\title{$2$-Morse Theory And Algebra Of The Infrared}
\pagestyle{plain}
\maketitle
\begin{abstract}
We develop the formal analogue of the Morse theory for a pair of commuting gradient-like vector fields. The resulting algebraic formalism turns out to be very similar to the algebra of the infrared of Gaiotto, Moore and Witten (see [GMW], [KKS]): from a manifold $M$ with the pair of gradient-like commuting vector fields, subject to some general position conditions we construct an $L_{\infty}$-algebra and Maurer-Cartan element in it.

We also provide Morse-theoretic examples for the algebra of the infrared data.
\end{abstract}

\tableofcontents

\section{Introduction}

We describe a geometric setting of $2$-Morse theory, in which algebra of the infrared arises naturally. Some parts of the text depend heavily on [KKS], our main source of knowledge about algebra of the infrared theory.

The $2$-Morse theory is a theory of a pair of commuting vector fields $v$ and $w$ on a smooth compact manifold $M$. These vector fields should also be subject to various assumptions, which we will discuss in the next parts.

One could also consider a $k$-tuple of commuting vector fields, but we restrict ourselves to the case of a pair of vector fields for the sake of simplicity.

\begin{definition} \hfill \break
$0$-dimensional orbits of the pair of vector fields (i.e. vanishing points of both $v$ and $w$) are called \textbf{critical points}.
\\ $1$-dimensional orbits (i.e. trajectories of both $v$ and $w$) are called \textbf{critical edges}.
\\ $2$-dimensional orbits are called \textbf{leaves}.
\end{definition}

Now we briefly outline the results.

In the classic Morse theory, one defines the graded space $V$, generated by the critical points (with appropriate grading coming from index) and constructs from the geometry of trajectories an element $\phi \in Hom^1(V, V)$, called Morse differential, satisfying $\phi^2 = 0$. The cohomology of this differential coincide with cohomology of $M$.

The similar situation is present in the $2$-Morse setting: namely, one defines an $L_{\infty}$ algebra $F$ from the geometry of critical edges, and finds (from geometry of leaves) an element $\phi \in F$ satisfying Maurer-Cartan equation.

Moreover, $F$ is actually quasi-isomorphic to the Hochshchild cohomology of some category $C$, and deformation of this category by $\phi$ is, though under much stronger assumptions, seen to be equivalent to the dg-category of trajectories of the field $v$ - so, $2$-Morse theory is in a sense a description of the space of trajectories of the field $v$ in terms of Morse theory of a commuting vector field.

These constructions are in parallel (and the last one is motivated by) with exactly the same algebra appearing in the different geometric setting: $\zeta$-instanton equation in [GMW], [KKS]. \\

The paper is organised as follows: at first we recall the notions from the classic Morse theory and motivate $2$-Morse, then we set our geometric assumptions, then we prove the existence of $L_\infty$ algebra $F$ and Maurer-Cartan element $\phi$. After that, we compare the results with [GMW] and [KKS], and discuss possible interpretations of the universality lemma. In the last part, we provide two explicit examples, coming from $\mathbb{RP}^n$ and $\mathbb{CP}^n$.

Author would like to thank his scientific advisor, Andrei Losev, for introducing the problem and the main idea, would like to thank Mikhail Kapranov for discussions about Algebra of the Infrared and Morse theory, would like to thank Petr Pushkar, Dmitry Korb, Jim Stasheff, Misha Verbitsky for their interest at the early stages of this work.

The study has been funded by the Russian Academic Excellence Project '5-100'.

\section{Motivation}

Our main initial motivation for developing $2$-Morse theory was an attempt to understand the enumerative geometry of surfaces. It is well known that moduli space problem for the maps from (complex) surface $S$ to some algebraic variety $X$ in general does not enjoy perfect obstruction theory, and there is no known theory of virtual fundamental classes for these kind of problems. The idea was to somehow circumvent this issue by finding whether some kind of algebraic structure governs the combinatorics of these moduli spaces for the good cases (say, with $X = \mathbb{CP}^n$). The $2$-Morse theory is the toy case of ${{(\mathbb{C}^{\star})}^2}$-action on $X$, from which one could hope to use localization formula. We are yet unable to do it, though.

The newfound motivation is the fact that the combinatorial structure appearing in $2$-Morse is the same as algebra of the infrared of Gaiotto, Moore and Witten. Not only $2$-Morse provides simple examples of these kind of structures, this connection is quite curious also for the reason that moduli problem considered in [GMW] is actually $1$-\textit{dimensional} (governed by elliptic complex of length $2$).

\section{$1$-Morse-Bott theory}

The good point to introduce the notions from the next parts of the article is a $1$-dimensional case. To see some important principles which will hold also in $2$-dimensional case it is useful to consider Morse-Bott version of the theory. So, the setting is a manifold $M$, endowed with a vector field $v$ which is gradient-like (that is, $v$ is the gradient of some function with respect to some Riemannian metric) such that critical points form the collection of smooth submanifolds $M_i$, and the vector field is non-degenerate in the directions transverse to the $M_i$'s.

We also fix some ordering on $M_i$'s such that it agrees with the direction of downward flow (it is always possible to choose this ordering because of the gradient-like condition). We will assume that ordering by $i$'s is exactly this ordering.

We use the notation $C_{\bullet}(X)$ for the complex of polyhedral chains on the manifold $X$. In what follows, we will actively use intersection of chains, so the operations on the complexes below won't be defined everywhere. Additional care is needed if one wants to explicitly use these algebras on dg-level - this is explicitly done, for instance, in the paper [BH].

Let us denote by $V = \bigoplus V_i = \bigoplus C_{\bullet} (M_i)$ the complex of chains of the collection $M_{i}$. It is endowed with the natural filtration by the order of $i$. We denote as $B = End(V) = \underset{i<j}\bigoplus Hom(V_i, V_j)$ the dg-algebra of oriented endomorphisms of the complex $V$. We call it $B$ because of the ''border'' mnemonic: the elements of complex (naturally quasiisomorphic to) $B$ are chains in the space of pairs of critical points, which can be thought as the ''borders'' of Morse trajectories. We also shift $V_i$'s grading by an index of the corresponding critical submanifold.

Consider also the space of trajectories $M_{ij}, i<j$. Under standard assumptions of generality it is a manifold with angles, admitting the following description of the boundary: \[\partial M_{ik} = \bigcup_{i<j<k} M_{ij} \underset{M_j}{\times} M_{jk}\]

We denote $T = \underset{i<j}{\bigoplus} T_{ij} = \underset{i<j}\bigoplus C_{\bullet}(M_{ij})$. The complex $T$ has a structure of (partially defined) dg-algebra with respect to the following composition: \[t_{\star}s^{\star}:T_{ij} \otimes T_{jk} \rightarrow T_{ik}\] is defined with the help of the following diagram: \[M_{ij} \times M_{jk} \underset{s}{\hookleftarrow} M_{ij} \times_{M_j} M_{jk} \underset{t}{\hookrightarrow} M_{ik}\]

where $s$ denotes natural inclusion of fibered product into product and $t$ denotes composition map of trajectories from the description of the boundary.

There is also a natural action of $T$ on $V$: \[T_{ij} \otimes V_i \rightarrow V_j\] The map $T_{ij} \rightarrow B_{ij}$ is a push-forward along the boundary map $M_{ij} \rightarrow M_i \times M_j$.

Denote as $\phi_{ij} \in T_{ij}$ the collection of fundamental chains of $M_{ij}$ (these are manifolds with angles, so there is a fundamental chain),

\[\phi = \bigoplus_{i<j} \phi_{ij}\]

Then, the description of boundary of the space of trajectories yields the following Maurer-Cartan equation:

\[\partial \phi_{ik} = \sum_{i<j<k} \phi_{ij} \circ \phi_{jk} \]
\[\partial \phi = \phi \circ \phi \]

One can also consider the projection of this element in $B$, which will also denote as $\phi$. The deformed differential $\partial - \phi$ on $V$ calculates the homology of $M$.

In the case of regular Morse theory the only component of $\phi$ which does not vanish under projection to $B$ component of (topological) degree zero - because $B$ is interpreted as the discrete space of pairs of critical points it has no polyhedral chains of higher degree. The differential on $V$ also vanishes, so the Maurer-Cartan equation becomes $\phi^2 = 0$ and Morse differential is recovered.

\section{$2$-Morse theory and geometric assumptions}

In this part we describe the analogous picture of $2$-Morse theory and various geometric assumptions. We want to point out that while most of these assumptions are analogous to some kind of ''generality'' conditions in $1$-Morse theory, we do not know any good way of imposing these conditions by the small perturbation. We have also some kind of no-go example for one of these conditions, which will be presented later.

\begin{assumption}[Gradient condition] $v = grad(f_v)$, $w = grad(f_w)$ for some functions $f_v$, $f_w$ w.r.t. to some Riemannian metric.
\end{assumption}

\begin{assumption}[Morse critical points condition] Critical points are isolated. Each one admits linearization in a local chart, at which both fields are linear diagonal vector fields.
\end{assumption}

\begin{assumption}[Critical edges condition] Critical edges are isolated.
\end{assumption}

Two previous conditions can be relaxed. Relaxing 4.3 to the Morse-Bott-like condition is mostly harmless and is also needed if one considers actions of $(\mathbb{C}^{\star})^2$, relaxing 4.2 will most likely lead to the notion of extended system of coefficients from [KKS].

\begin{definition} ($2$-dimensional) \textbf{toric fan} is a subdivision of a plane into a finite collection of angles (each less than $\pi$).
\end{definition}

\begin{definition}\textbf{Toric domain} is a compactification of $\mathbb{R}^2$ governed by a $2$-dimensional toric fan, which admits $\mathbb{R}^2$-action. This is a standard construction from toric geometry, we recall now how it works.

It is an $n$-gon topologically (where $n$ is a number of edges of the fan): interior of each angle corresponds to one vertex (and all $1$-parametric orbits going in the directions of this angle end in this vertex). Edges of the fan correspond to the $1$-dimensional orbits of $\mathbb{R}^2$, connecting the vertices.

Real part of $2$-dimensional toric variety corresponding to this fan is a union of 4 $\mathbb{R}_{>0}^2$-orbits, each one being a toric domain (under exponential identification of $\mathbb{R}$ and $\mathbb{R}_{>0}$).
\end{definition}

\begin{note} Closure of any leaf is a toric domain, with boundary on the union of critical edges.
\end{note}

\begin{definition} \textbf{Direction} of the oriented edge is the vector $(a, b) \in \mathbb{R}^2$ such that $bv - aw$ vanishes on the edge and $av + bw$ goes along the orientation of the edge.
\end{definition}

\begin{definition} \textbf{Frame} is a cyclic sequence of critical points connected by critical edges such that their directions are cyclically ordered. Frames are possible boundaries for the leaves. The vector space generated by frames (or chain complex on the space of frames in the case of Morse-Bott version) is denoted $F$ and is an analogue of $B$ from the previous part.
\end{definition}

For a frame $f$, we denote as $M_f$ the variety of leaves with the boundary $f$.

\begin{definition} We define \textbf{expected dimension} of $M_f$ as follows: consider the (smooth) normal bundle of an individual leaf of $M_f$. It is well-defined on a boundary of a leaf. This normal bundle also admits equivariant $\mathbb{R}^2$-action. Invariant Euler characteristic of this bundle w.r.t. to this action is called an expected dimension.
\end{definition}

\begin{assumption} $M_f$ has expected dimension.
\end{assumption}

\begin{note} This condition in regular Morse theory is guaranteed by the generality of the vector field. In other parts of enumerative geometry, virtual fundamental classes are used instead. We have no idea if it is possible to provide some kind of general position argument for this to hold and it is also unclear how to construct virtual fundamental chains because of the presense of second cohomology group of the normal sheaf.
\end{note}

The $k$-dimensional equivariant bundle over the toric domain admits the following construction: for every vertex $v_i$ define the set with multiplicities on the (dual) plane $S_i = \{p_{i_1}, ..., p_{i_k}\}$, corresponding to the characters of an action of $\mathbb{R}^2$ on a fiber over vertex $v_i$. For two neighboring vertices $v_i$ and $v_{i+1}$ consider the projection of the corresponding sets on the edge of the toric fan. They coincide (it is clear from analysis of the restriction of the bundle on the corresponding $1$-dimensional orbit).

If one connects these sets $S_i$ by the collections of lines, orthogonal to the edges, one gets a piecewise-linear curve (actually, homological $1$-chain) corresponding to the bundle. We call it \textbf{characteristic curve}. If characteristic curve goes through zero, we deform the corresponding segment slightly to go around zero counterclockwise.
\begin{center}
\includegraphics[trim={1cm 0 1cm 0}, clip, width=7cm]{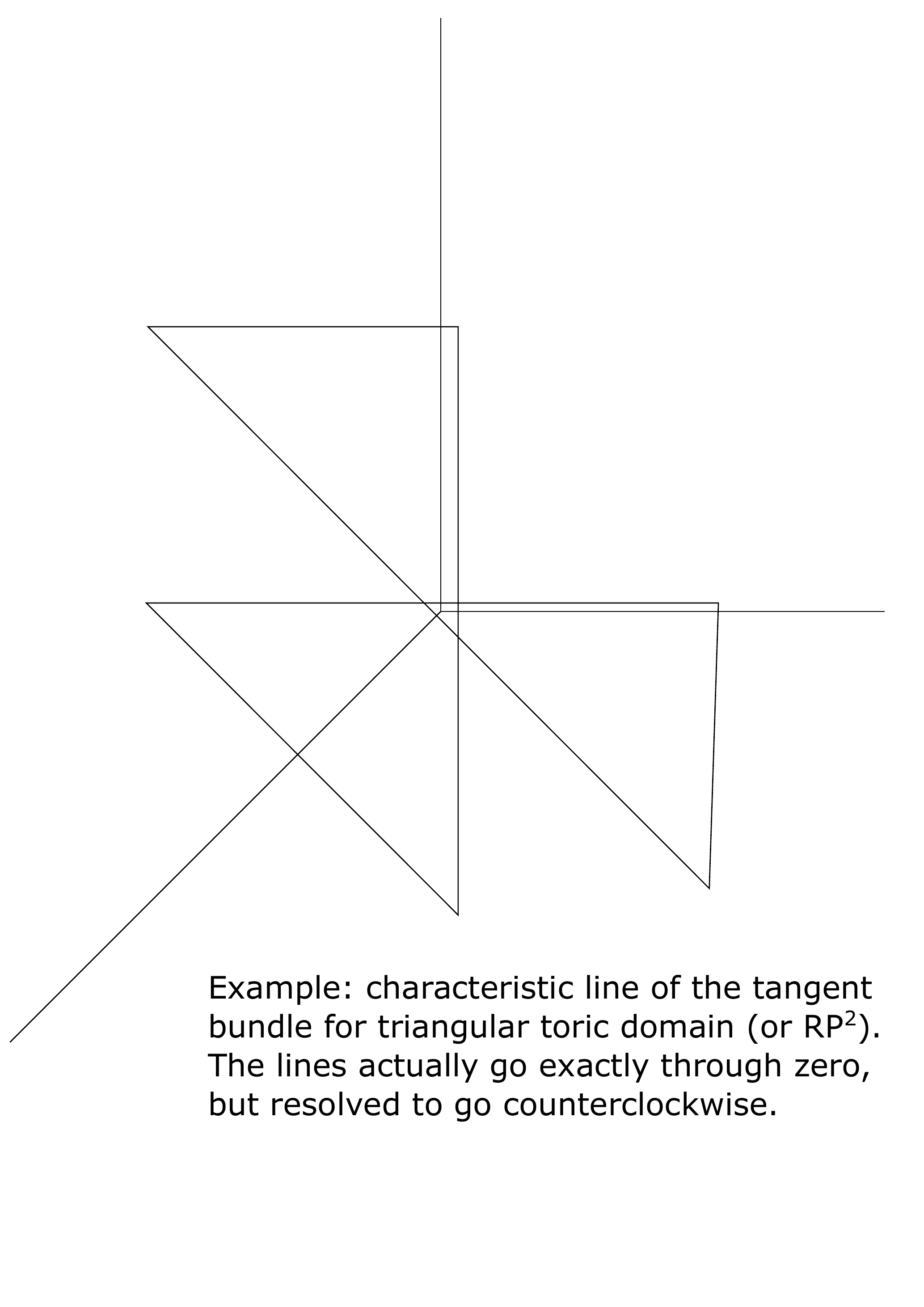}
\end{center}

\newpage

\begin{lemma} For a piecewise linear curve $\gamma$, comprised of the cyclically ordered collection of segments $\gamma_i$ on a plane the index (winding number) $I(\gamma)$ around zero satisfies

\[I(\gamma) = k - \sum_i a_i + \sum b_i\]

where $k$ is the turning number of $\gamma$

$a_i = 1$ if $\gamma_i$, prolonged to an oriented straight line, has zero in the left half-plane w.r.t. to the orientation of the line, and $0$ otherwise

$b_i = 1$ if a pair of segments $\gamma_i$, $\gamma_{i+1}$, prolonged to an infinite oriented angle (we prolong free ends of the segments to the infinite rays) has zero in the left part of the plane w.r.t. to the orientation of the angle, and $0$ otherwise
\end{lemma}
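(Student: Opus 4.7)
The plan is to regard the origin as a variable point $P\in\mathbb{R}^2$ and to prove that
\[
F(P) := I(P) - k + \sum_i a_i(P) - \sum_i b_i(P)
\]
is locally constant in $P$ away from a codimension-one set, and that $F(P) \to 0$ as $P \to \infty$. Evaluating at $P = 0$ then yields the lemma.

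For local constancy, $F$ can only jump as $P$ crosses one of the extended lines $L_j := \operatorname{span}(\gamma_j)$. Each $L_j$ decomposes into the segment $\gamma_j$ and the two open rays beyond its endpoints, giving three cases. (a) If $P$ crosses the interior of $\gamma_j$, then $I$, $a_j$, $b_{j-1}$ (the wedge at $p_j$), and $b_j$ (the wedge at $p_{j+1}$) all jump, since $\gamma_j$ is simultaneously part of the wedge ray at $p_j$ in direction $u_j$ and the wedge ray at $p_{j+1}$ in direction $-u_j$. (b) If $P$ crosses $L_j$ on the ray past $p_{j+1}$, only $a_j$ and $b_{j-1}$ jump, since this ray lies on the wedge ray at $p_j$ but on neither wedge ray at $p_{j+1}$. (c) Symmetrically, on the ray past $p_j$ only $a_j$ and $b_j$ jump. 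In each case the jumps share a common sign $\pm 1$ (the direction of crossing $L_j$), so $\Delta F = 0$. The sign uniformity rests on the geometric observation that the \emph{left wedge} at any vertex $p$, restricted to a neighbourhood of one of its bounding rays, coincides with the left half-plane of that ray, regardless of whether the turn at $p$ is a left turn (in which case the left wedge is the small wedge) or a right turn (in which case it is the large wedge); this is a short direct verification from the definitions.

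For the boundary value, let $P$ tend to infinity in a generic direction $\theta$. Then $I(P) = 0$; writing $\mu_i = \arg(u_i)$ and $\tau_{i+1} = \mu_{i+1} - \mu_i \in (-\pi,\pi)$ for the turning angle at $p_{i+1}$, one checks that $a_i(P) = 1$ iff $\theta - \mu_i \in (0,\pi) \pmod{2\pi}$, and uniformly in the sign of $\tau_{i+1}$, $b_i(P) = 1$ iff $\theta - \mu_i \in (\tau_{i+1},\pi) \pmod{2\pi}$. Subtracting gives
\[
a_i(P) - b_i(P) = \operatorname{sign}(\tau_{i+1}) \cdot \mathbf{1}\{\theta \text{ lies in the short arc from } \mu_i \text{ to } \mu_{i+1}\},
\]
and the sum $\sum_i(a_i - b_i)$ is therefore precisely the signed count of short arcs of the cyclic sequence $(\mu_i)$ containing $\theta$, which by definition of the turning number equals $k$. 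Hence $F(\infty) = 0 - k + k = 0$, and combined with local constancy this forces $F \equiv 0$; specializing to $P = 0$ gives the stated identity. The main obstacle throughout is the sign bookkeeping in case (a) of the local constancy step, where the fact that the left wedge switches between being a small and a large angular sector across left/right turns must be shown to produce the same signed jump for $b_{j-1}$ and $b_j$ as for $a_j$; once this uniformity is established, the remainder of the argument is straightforward accounting.
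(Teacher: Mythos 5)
Your proof is correct, and it reorganizes the argument in a way that differs meaningfully from the paper's. The paper fixes the origin and deforms the \emph{curve}: it invokes the PL Whitney--Graustein fact that any two curves of the same turning number are connected by a homotopy without turning points, takes as base case the curves that wind counterclockwise around zero (where $a_i=b_i=1$ and the identity reads $I=k$ trivially), and checks the two wall-crossing cases as prolongations of segments sweep through the origin. You instead fix the curve and move the \emph{base point} $P$ (equivalently, you only ever translate the curve), which makes the deformation step trivial but forces you to anchor the identity at $P=\infty$, where you must compute $\sum_i(a_i-b_i)=k$ via the Gauss-map/short-arc count. The local wall-crossing analysis is essentially the same in both proofs --- your cases (a), (b), (c) are the paper's Cases 1 and 2, with the same sign-uniformity issue at the heart of it, which you correctly isolate as the statement that the left wedge agrees with the left half-plane near each of its bounding rays. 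What your version buys is self-containedness: you avoid the unproved regular-homotopy classification the paper leans on, at the cost of an explicit (but routine) identification of the turning number with the degree of the piecewise Gauss map. One small point worth making explicit in your write-up: $F$ is locally constant only on the complement of the finitely many lines $L_j$, which is disconnected, so you need the vanishing of \emph{every} wall jump (which you prove) to conclude $F$ is globally constant before specializing to $P=0$; and the lemma implicitly assumes $0$ lies on no $L_j$, consistent with the paper's convention of perturbing characteristic curves around zero.
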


\begin{center}
\includegraphics[trim={0 11cm 0 2cm}, clip, width=9cm]{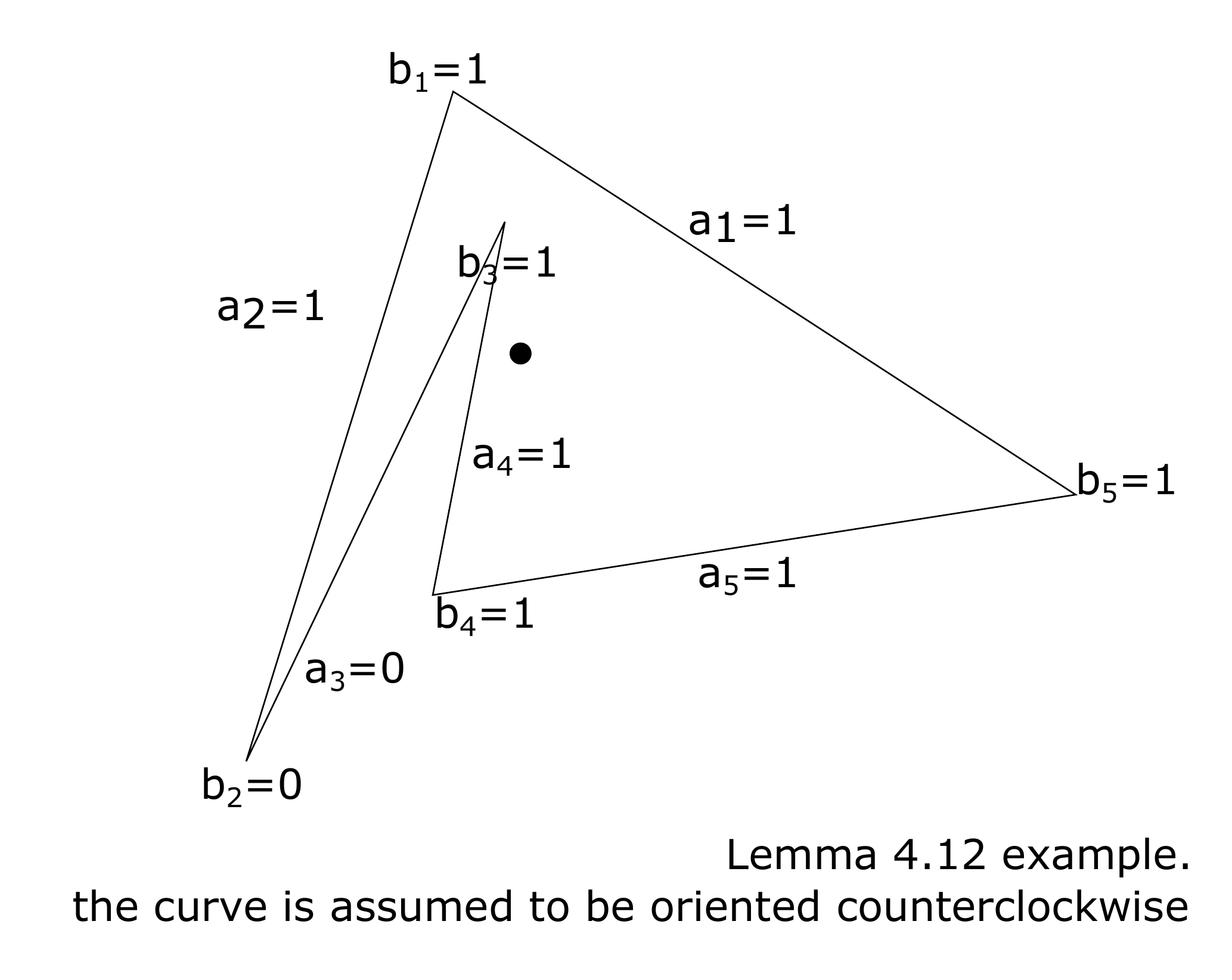}
\end{center}
\newpage

\begin{proof} Clearly, the sum does not change if one breaks up any segment into two parts. It is also well known that any two curves of a given turning number are connected by piecewise-linear homotopy without turning points (points where segment $\gamma_{i+1}$ goes backwards along the segment $\gamma_i$). If homotopy chosen is general enough, the prolongations of $\gamma_i$'s will go through zero one by one (these are codimension $1$ walls in the space of curves).

The formula is clearly true for curves that always go around zero counterclockwise ($a_i = b_i = 1$). It is enough to check how the RHS and LHS changes when the prolongation of some segment $\gamma_i$ goes through zero. There are two cases.

\textbf{Case 1.} Suppose $\gamma$ moves through the situation where segment $\gamma_i$ contains zero. Then, $a_i$ changes by $1$, and two neighboring terms, $b_i$ and $b_{i+1}$ also change by 1. The total sum changes by one (exactly as $I(\gamma)$).

\textbf{Case 2.} Suppose $\gamma$ moves through the situation where prolongation of a segment $\gamma_i$ contains zero, but not $\gamma_i$. Then, $a_i$ changes by $1$ and \textit{one} of the neighboring terms also. The total sum does not change.
\end{proof}

\begin{lemma} Invariant Euler characteristic of a bundle over a toric domain is an index of it's characteristic curve around $0$.
\end{lemma}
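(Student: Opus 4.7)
The plan is to reduce to the rank-one case and then match both sides through a local equivariant computation combined with Lemma 4.12.

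First, I would use additivity. An $\mathbb{R}^2$-equivariant bundle over a toric domain admits an equivariant filtration whose successive quotients are equivariant line bundles: at each vertex $v_i$ the fiber splits into characters, and these character decompositions extend globally via the toric structure (cf.\ Klyachko's description of equivariant toric bundles). The invariant Euler characteristic is additive in equivariant short exact sequences. The characteristic $1$-chain of the filtered bundle is the sum of the $1$-chains of its line bundle quotients, since the construction depends only on the multiset of characters at each vertex and on the perpendicularity/projection constraint on each segment; winding numbers of closed $1$-chains are likewise additive. It therefore suffices to prove the lemma for a single line bundle $L$ with characters $p_1,\ldots,p_n$ at $v_1,\ldots,v_n$.

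Second, for such a line bundle I would compute $\chi^{\mathrm{inv}}(L)$ via the open cover by vertex charts $U_i \cong \mathbb{R}^2$. On $U_i$ the action is linear with weights equal to the two fan edges at $v_i$, and an invariant regular section of $L|_{U_i}$ exists (unique up to scalar) iff $-p_i$ lies in the dual cone $\sigma_i^\vee$. A \v{C}ech computation on the nerve of the vertex cover then expresses $\chi^{\mathrm{inv}}(L)$ as a signed count: a vertex contribution detecting $[-p_i \in \sigma_i^\vee]$, an edge contribution from the one-dimensional orbit between $v_i$ and $v_{i+1}$, and a global top-degree term.

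Third, I would match this count term-by-term with Lemma 4.12. The rotated dual cone at $v_i$ is precisely the angle swept by $\gamma$ at $p_i$ between segments $\gamma_{i-1}$ and $\gamma_i$, so the condition $-p_i \in \sigma_i^\vee$ is exactly the indicator $b_i$. The analogous condition on an edge of the toric domain translates into $a_i$, and the turning number $k$ absorbs the remaining $H^0$ contribution. The rule that $\gamma$ is deformed counterclockwise whenever it meets $0$ is exactly what is needed so that the trivial line bundle (all $p_i = 0$, $\gamma$ a small counterclockwise loop) gives $\chi^{\mathrm{inv}} = I(\gamma) = k - n + n = 1$, consistent with the invariant sections being the constants.

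The main obstacle is the second step: establishing a careful dictionary between the local equivariant contributions at vertices and edges and the combinatorial indicators $a_i, b_i$, with all orientation conventions on the dual lattice tracked correctly. Once this dictionary is verified on a base case (the trivial bundle, or the tangent bundle of $\mathbb{RP}^2$ pictured in the figure), Lemma 4.12 immediately produces the required formula.
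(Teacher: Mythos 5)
Your steps 2 and 3 are in substance the paper's own proof: the paper computes the invariant Euler characteristic via the \v{C}ech-like resolution $\mathfrak{F} \rightarrow \bigoplus_i \mathfrak{F}|_{U_i} \rightarrow \bigoplus_i \mathfrak{F}|_{U_i \cap U_{i+1}} \rightarrow \mathfrak{F}|_U$ over the vertex charts, applies the invariant sections functor, and identifies the resulting alternating sum with the right-hand side of Lemma 4.12 (vertex contributions $\leftrightarrow$ the $b_i$, edge contributions $\leftrightarrow$ the $a_i$, the open-orbit term $\leftrightarrow$ the turning number $k$). So the heart of your argument is sound and matches the source.

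The genuine gap is in your step 1. The claim that the character decompositions at the vertices ``extend globally via the toric structure,'' so that every equivariant bundle admits an equivariant filtration with line-bundle quotients, is false in general. The gradings at the two endpoints of an edge are matched only after projection to the edge character; when several characters at a vertex project to the same point, the transition over the edge orbit may mix them, and composing these matchings around the polygon can produce nontrivial monodromy on the multiset of characters. This is precisely why Klyachko's classification is phrased in terms of compatible filtrations rather than gradings, and even those filtrations need not have locally free line-bundle subquotients --- the tangent bundle of $\mathbb{P}^2$ already fails, and its real analogue is exactly the bundle depicted in the paper's figure, whose characteristic curve is a single connected hexagon rather than a union of triangles. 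The paper's remark that the characteristic curve is ``actually a homological $1$-chain'' is an acknowledgement of this phenomenon. Fortunately your reduction is unnecessary: every term in your \v{C}ech computation is a count of characters satisfying a cone condition and is therefore linear in the multisets $S_i$, and Lemma 4.12 applies directly to the rank-$k$ characteristic $1$-chain (whose total turning number is $k$). Dropping step 1 and running steps 2--3 in rank $k$ recovers the paper's argument verbatim.
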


\begin{proof} Toric domain admits the acyclic covering using the standard coordinate charts $U_i$ corresponding to the vertices. We denote by $U$ the open part of the toric domain. Then, any invariant sheaf $\mathfrak{F}$ admits the following (Cech-like) resolution: \[\mathfrak{F} \rightarrow \bigoplus_i \mathfrak{F}|_{U_i} \rightarrow \bigoplus_i \mathfrak{F}|_{U_i \cap U_{i+1}} \rightarrow \mathfrak{F}|_U\] Now, one should use the invariant sections functor and calculate an Euler characteristic, which amounts exactly to the previous combinatorial lemma.
\end{proof}

\begin{note}It also should be noted that the characteristic curve is additive over short exact sequences of equivariant bundles: the characteristic curve of the middle term of an exact sequence is just $1$-chain sum of the characteristic curves of the left and right terms.
\end{note}

\section{Description of the structure}

\begin{assumption} Consider the set $A$ on a plane: \[A = \{(f_v(p_1), f_w (p_1)), ..., (f_v(p_n), f_w (p_n))\}\] $p_i$'s being the critical points. This set $A$ is assumed to be in general position.
\end{assumption}

It is not clear, whether it is always possible to deform $A$ to be in general position. While we do not know counterexamples, it is currently not clear for us.

In order to describe the boundary of the space $M_f$ in the general position, one needs to find possible degenerations occuring in the codimension $1$.  The degeneration of the leaf is the union of leaves, which is still a disk topologically. The corresponding net of frames will be called \textbf{subdivision}. This net projects onto $A$ as the subdivision of a convex polygon with vertices in $A$ into convex parts.

\begin{definition} \textbf{Graph with directions} is the following abstract data: graph with the special vertex ''infinity'', with the direction fixed for every oriented edge. These directions are subject to the following properties:

1) The change of orientation of the edge changes the direction to the opposite

2) Directions of outgoing edges of every vertex (including infinity) form a toric fan
\end{definition}

For every frame subdivision there is a dual graph with directions: with finite vertices corresponding to the frames of the subdivision, edges corresponding to the edges of between frames, outgoing edges corresponding to the outer edges of the subdivision, directions orthogonal to the directions of the edges.

We also recall the notion of the web, occuring in the works [GMW], [KKS].

\begin{definition} \textbf{Web} is a graph drawn on the euclidean plane with linear edges (and also some edges outgoing to infinity), each of its vertices corresponding to a convex fan, and, also, the set of outgoing edges also corresponds to a convex fan. Two webs are called equivalent if they realize the same graph with directions, that is: there is a bijection between vertices which preserves the directions of edges.
\end{definition}

\begin{definition} The graph with directions is called \textbf{realizable} if there is a web corresponding to it (in an obvious way). The dimension of the space of representations is an important invariant of the web - for instance, web is called \textbf{rigid} if there exists only one representation up to translations and dilation.
\end{definition}

The \textbf{space of realizations} of a given graph with directions forms a polyhedral cone in the configuration space of vertices: each edge defines a linear equation (angle of the edge) and a linear inequality (orientation of the edge).

\begin{definition} The \textbf{expected dimension} of the space of realizations equals $2v - e - 2$ (we quotient out parallel translations but not dilations. $v$ is the number of vertices, $e$ is the number of internal edges).
\end{definition}

\begin{definition}We will call the graph \textbf{semi-realizable}, if it is either realizable, or admits the following inductive description: a web with maybe some subgraphs shrinked into the points (we will call these almost-realizations), where shrinked subgraphs are also semi-realizable.
\end{definition}

\begin{lemma} \textbf{Semi-realizability} is a closed property over the space of correct fans of vertices.
\end{lemma}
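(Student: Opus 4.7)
The plan is to proceed by induction on the number of internal edges of $G$. The base case of a graph with no internal edges is immediate, since any correct fan configuration on a single vertex is trivially realizable. For the inductive step, I take a sequence of correct fan configurations $F^{(n)}$ on $G$ converging to a limit $F$, with each $(G, F^{(n)})$ semi-realizable, and produce an almost-realization of $(G, F)$ whose shrunk subgraphs are themselves semi-realizable.

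Concretely, for each $n$ pick an almost-realization $W_n$ of $(G, F^{(n)})$ and associate to every vertex $v$ of $G$ its position in $W_n$, namely the position of the (possibly shrunk) vertex of the contracted graph to which $v$ maps. Normalize by the action of translations and dilations so that all vertex positions lie in a ball of unit diameter. Compactness of the resulting configuration space lets one pass to a subsequence along which the vertex positions converge pointwise to a limit configuration $W$. Define an equivalence relation on the vertices of $G$ by colocation in $W$, and let $G_1, \ldots, G_k$ be the maximal connected subgraphs each contained in a single equivalence class; collapsing these in $G$ produces a smaller graph $G'$. The configuration $W$ then exhibits an honest web realization of $G'$ with respect to the restricted limit fans: edge directions are preserved in the limit and the remaining edges keep positive lengths by the very choice of the $G_i$, while the vertex fans of $G'$ remain convex since convexity of a direction configuration is a closed condition. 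For each $G_i$ with more than one vertex, re-zoom the $W_n$ around the associated collision point by a second translation and dilation, extract a convergent subsequence, and invoke the inductive hypothesis (since $G_i$ has strictly fewer internal edges than $G$) to conclude that $(G_i, F|_{G_i})$ is semi-realizable. Assembling these pieces exhibits an almost-realization of $(G, F)$ with semi-realizable shrunk subgraphs.

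The main obstacle is the re-zooming step when the $W_n$ are genuinely almost-realizations rather than honest webs, so that parts of each $G_i$ may already be collapsed inside $W_n$ before any limit is taken. One must unravel the recursive definition of semi-realizability and verify that the almost-realizations of $G_i$ extracted after re-zooming inherit a compatible almost-realization structure from the $W_n$. This is ultimately controlled by the induction on internal edges, which bounds the depth of nested shrinking, but it is the one place where one cannot appeal solely to standard compactness arguments and must argue combinatorially. A secondary technical point, easily handled, is to keep careful track of the fact that the auxiliary rescalings at different clusters are independent of the global rescaling used to extract $W$.
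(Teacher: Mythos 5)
Your argument takes a genuinely different route from the paper's, which is a two-line observation rather than a Gromov-compactness argument. The paper uses the remark stated just before the lemma: the realizations of a fixed graph with directions form a polyhedral cone in the configuration space of vertices, each edge imposing a linear equation (its direction) and a linear inequality (its orientation, i.e.\ nonnegative length). An almost-realization with a prescribed pattern of shrunk subgraphs is then a non-zero point of the corresponding \emph{closed} cone --- the collapsed pieces are exactly where the inequalities degenerate to equalities --- and ``the closed cone $C(F)$ contains a non-zero vector'' is a closed condition on the fan data $F$: normalize to the unit sphere and take a subsequential limit. Semi-realizability is a finite union, over the finitely many nested shrinking patterns, of finite intersections of such conditions, hence closed. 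Your proof instead reconstructs the shrinking pattern geometrically from the limit of normalized configurations; this is more informative (it identifies which degeneration actually occurs) but it is also where all the work migrates.

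The place where your write-up stops short of a proof is the one you flag yourself: when the $W_n$ are genuine almost-realizations, the re-zoomed restriction to a cluster $G_i$ is not a point configuration you can pass to the limit with, since part of $G_i$ is already collapsed in $W_n$ and its semi-realizability is recorded only recursively; you assert this is ``controlled by the induction'' without carrying it out. The clean fix is the same combinatorial finiteness the paper's cone formulation builds in from the start: pass to a further subsequence along which the shrinking pattern of $W_n$ is constant, after which the unshrunk vertices converge honestly and each shrunk piece gives a sequence of semi-realizable proper subgraphs with converging fan data, to which the inductive hypothesis applies directly, with no re-zooming needed for them. Two smaller points should also be made explicit. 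First, the normalization must fix the diameter to be exactly one, not merely at most one, so that the limit has at least two clusters, every $G_i$ is proper, and the count of internal edges genuinely decreases. Second, the fan at a contracted vertex of $G'$ is assembled from edges of several colliding vertices, so its convexity (each angle strictly less than $\pi$) is an \emph{open} condition and is not inherited from the convexity of the individual vertex fans by any closedness argument; it must be verified as part of checking that $W$ is a web of $G'$ rather than dismissed with ``convexity is a closed condition.''
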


\begin{proof} The existence of an almost-realization is a closed property, because it amounts to the existence of a non-zero vector in the closed polyhedral cone. Semi-realizability is almost-realizability of any subgraph.
\end{proof}

\begin{lemma} Assumption 5.1 implies that the dual graphs of the subdivisions of convex polygons with vertices in $A$ which are semi-realizable are actually realizable.
\end{lemma}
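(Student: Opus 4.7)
The plan is to proceed by induction on the number of internal vertices of the dual graph $G$. The base case is a single-vertex $G$: here the prescribed outgoing directions already form a convex fan by the definition of a frame, so $G$ is trivially realizable.

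For the inductive step, I would argue by contradiction: suppose $G$ is semi-realizable but not realizable. By the definition of semi-realizability, there is a strict almost-realization---an outer web $W$ in which some proper subgraph $G' \subsetneq G$ is shrunk to a single vertex $p$, with $G'$ itself semi-realizable. The subgraph $G'$ corresponds to a sub-collection of frames of the original subdivision, tiling a convex sub-polygon $P' \subset P$; since the vertices of $P'$ also lie in $A$ they inherit general position, so the inductive hypothesis applies and $G'$ is realizable.

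The heart of the argument is then to show that a realized copy of $G'$ can be ``inflated'' inside the outer web $W$ to produce a genuine realization of $G$. Combinatorially this means deforming $W$ so that the vertex $p$ splits into the vertex set of a small realized copy of $G'$, while keeping the prescribed edge directions throughout. In terms of the polyhedral cone of realizations of $G$ (sitting in the configuration space of its vertices, as discussed after Definition 5.6), the almost-realization lies on a proper face of this cone, and inflation corresponds to moving into the interior of the cone from this face.

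The main obstacle is to verify that no algebraic obstruction prevents inflation. Such an obstruction would amount to an extra linear dependence among the directions of the edges emanating from $p$ in $W$; tracing it back through the orthogonality between dual-graph directions and subdivision edges, it imposes a non-generic linear relation on the coordinates of the critical values in $A$---precisely the kind of coincidence forbidden by Assumption 5.1. Making this identification precise, enumerating the possible boundary strata of the realization cone, and checking that every such linear relation is ruled out by general position of $A$, is the delicate technical step of the proof.
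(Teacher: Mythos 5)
Your overall strategy (induct on the size of the dual graph, realize the shrunk subgraph $G'$ by the inductive hypothesis, then ``inflate'' it inside the outer almost-realization $W$) is a reasonable direct attack, but the proposal stops exactly at the point where the lemma's actual content lies. The inflation step is not a routine perturbation: if you replace the vertex $p$ of $W$ by an $\epsilon$-scaled copy of a realization of $G'$, the edges of $W$ that previously ended at $p$ must now be re-aimed at the various boundary vertices of that scaled copy, which changes their directions and destroys the web condition; compensating for this requires moving the \emph{other} vertices of $W$ as well, i.e.\ it requires knowing that the realization cone of $G$ is nonempty near the degenerate configuration. Your description of the possible obstruction as ``an extra linear dependence among the directions of the edges emanating from $p$'' is therefore not an accurate reduction --- the obstruction is global (emptiness of the interior of a polyhedral cone cut out by all the edge equations and inequalities of $G$), not a local condition at $p$. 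Since you explicitly defer ``making this identification precise, enumerating the possible boundary strata, and checking that every such linear relation is ruled out by general position of $A$,'' the proposal in effect restates the lemma rather than proving it.

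For comparison, the paper does not argue in the web picture at all: it postpones the proof to Section 6, where Lemma 6.2 establishes the dictionary between webs and piecewise-linear concave functions (admissible subdivisions), and then identifies the present lemma with the known factorization property of admissible subdivisions for $A$ in general position from [KKS]. In that dual language your ``inflation'' becomes the transparent operation of adding a small multiple of the concave function inducing the subdivision of the subpolygon $P'$ to the concave function inducing the coarse subdivision, and the role of general position is the one already isolated in [KKS]. If you want to complete your direct argument, the cleanest route is to pass through this duality rather than to analyze the realization cone by hand; as written, the key step is missing.
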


This seems to be a well known fact in the dual language of admissible subdivisions. We postpone the proof until the next section.

\begin{lemma} Suppose the leaf $L$ is degenerated to the union of leaves $L_1, ..., L_k$. Then, its dual graph is realizable.
\end{lemma}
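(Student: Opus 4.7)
The plan is to identify the degeneration data, via $\Phi = (f_v, f_w)$, with a polytopal subdivision of a convex polygon with vertices in $A$, and then to invoke the preceding lemma that promotes semi-realizability to realizability under Assumption~5.1.

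First I would show that $\Phi$ sends the closure of each leaf $\bar{L_i}$ to a convex polygon $P_i \subset \mathbb{R}^2$ whose vertices lie in $A$ and whose edges are parallel to the direction vectors of the corresponding critical edges, so that $\{P_i\}$ is a polytopal subdivision of $P = \Phi(\bar{L})$. The crucial computation is along a critical edge of direction $(a, b)$: writing $v = \alpha T$ and $w = \beta T$ on the edge (with $T$ the unit tangent), the condition $bv - aw = 0$ forces $b\alpha = a\beta$, while the gradient condition gives $df_v = \alpha\, ds$ and $df_w = \beta\, ds$; integrating along the edge shows that the endpoint displacement in the $(f_v, f_w)$-plane is a positive multiple of $(a, b)$. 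Next I would identify the dual graph of the frame subdivision with the combinatorial dual of the polytopal subdivision $\{P_i\}$, equipped with edge directions orthogonal to the edges of the subdivision. The previous step gives that each edge of $\{P_i\}$ is parallel to the corresponding critical-edge direction, so the requirement ``dual edges orthogonal to critical-edge directions'' becomes precisely ``dual edges orthogonal to the shared subdivision edges''.

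Finally I would verify semi-realizability and invoke the preceding lemma. Any connected sub-collection of the $L_i$ whose images tile a convex sub-polygon of $P$ projects under $\Phi$ to a polytopal subdivision of the same kind, so iteratively collapsing such sub-collections produces almost-realizations at every level of the inductive definition of semi-realizability; combined with the lemma that semi-realizability is a closed property, this gives semi-realizability of the dual graph, and then Assumption~5.1 together with the semi-implies-realizable lemma completes the argument.

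The main obstacle is the first step: one must verify carefully that $\Phi|_{\bar L}$ is injective (rather than multiply covering $P$) and that the sub-images $P_i$ truly tile $P$ in a polytopal manner. The injectivity and convexity are controlled by the toric-domain structure from the note following the definition of toric domain (so that $\bar L$ inherits an equivariant model whose projection to the $(f_v, f_w)$-plane is a moment-map-type image), and the tiling property reduces to checking that two adjacent leaves sharing a critical edge project to polygons meeting along that same segment with opposite orientations. Once this geometric picture is established, Steps~2 and~3 are essentially formal consequences of the previous lemmas.
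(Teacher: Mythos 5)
Your first step (that $\Phi=(f_v,f_w)$ sends the closures of the leaves to convex polygons with vertices in $A$ forming a polytopal subdivision of $P=\Phi(\bar L)$, with edges parallel to the critical-edge directions) is consistent with the paper's setup, and your final appeal to Lemma 5.8 is the right closing move. The gap is in the middle: you claim semi-realizability of the dual graph follows from the tiling structure of $\{P_i\}$ by ``iteratively collapsing'' convex sub-collections. This cannot work, because it never uses the hypothesis that $L_1,\dots,L_k$ actually arise as a degeneration of a single leaf $L$; the same argument would apply verbatim to an arbitrary polytopal subdivision of a convex polygon into leaves, and would conclude that every such subdivision has a realizable --- hence, by Lemma 6.2, admissible/regular --- dual graph. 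That is false: non-regular subdivisions exist even for point sets in general position, and their dual graphs are neither realizable nor semi-realizable (were they semi-realizable, Lemma 5.8 would force realizability). Moreover, collapsing subgraphs combinatorially does not produce an almost-realization: an almost-realization still requires an actual placement of the surviving vertices in the plane with the prescribed edge directions, and collapsing everything to a single point merely pushes the whole problem into the shrunk subgraph, which is circular.

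The missing idea --- which is the entire content of the paper's proof --- is a limit argument using the degenerating family itself. One picks points $p_1,\dots,p_k$ on the nearby smooth leaves of the family so that in the limit they land on $L_1,\dots,L_k$, and pulls them back to $\mathbb{R}^2$ via the orbit parametrization of the leaf. The resulting configurations of $k$ points in $\mathbb{R}^2$ have edge directions converging to the prescribed ones, so the limiting configuration is an almost-realization (some subgraphs may shrink to points, and one iterates on those); this is where Lemma 5.7, the closedness of semi-realizability, genuinely enters. Only after semi-realizability is obtained this way does Assumption 5.1, via Lemma 5.8, upgrade it to realizability.
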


\begin{proof} One constructs the web as the following limit. Consider the family of the leaves $L$, degenerating, and pick a family of points $p_1, ..., p_k$ in such a way that in the limit they land on the leaves $L_1, ..., L_k$. This collection of points are to be pulled back $\mathbb{R}^2$, and gives one the vertices of the web, and it is clear that the directions of the edges tend to the desired ones. By the previous lemmas, it is semi-realizable and, hence, realizable.

This is also quite parallel to the description of physical argument we learned from [GMW].
\end{proof}

\begin{center}
\includegraphics[trim={0 2cm 0 0}, clip, width=7cm]{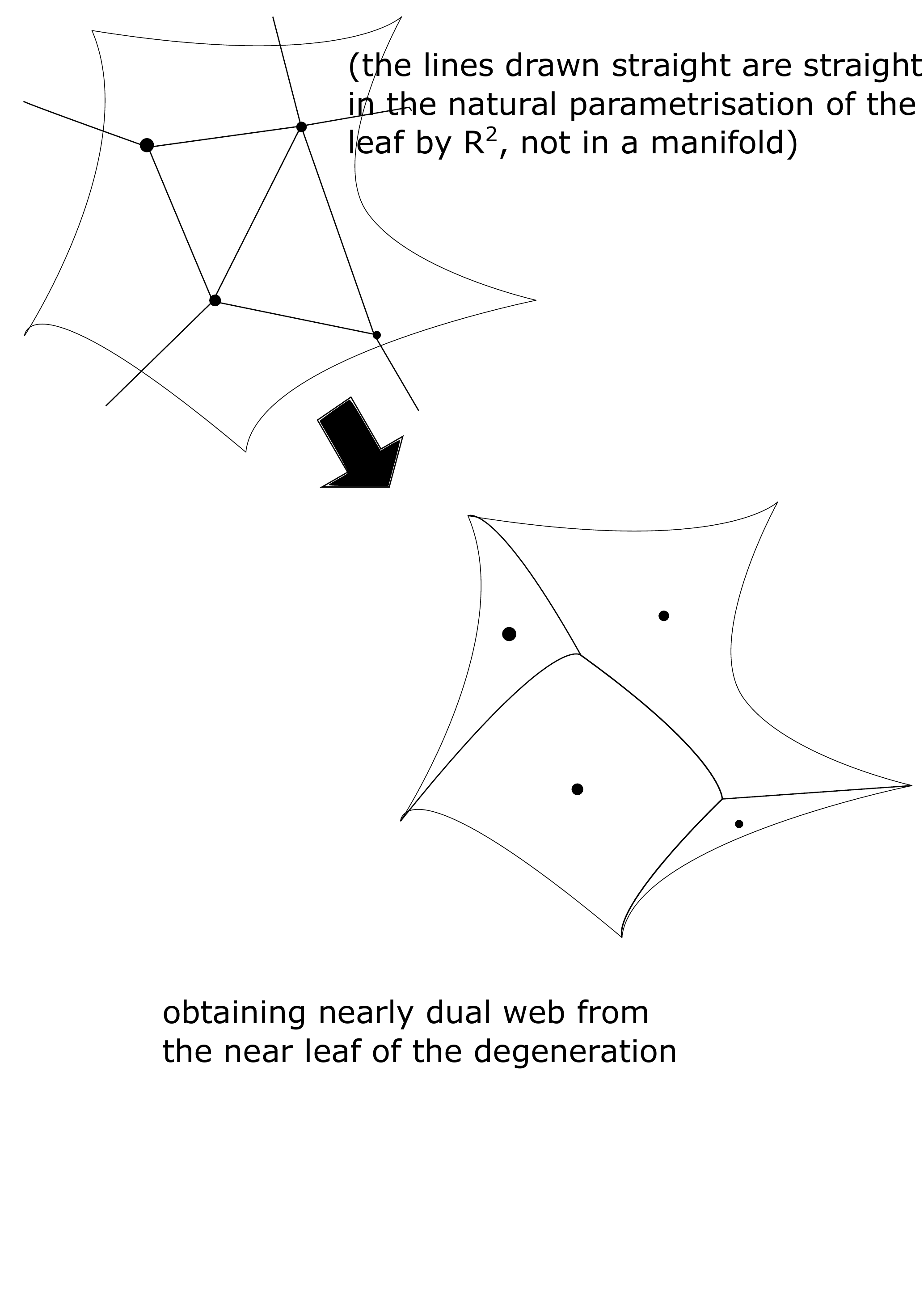}
\end{center}

Now, in order to proceed with the description of the boundary of the space $M_f$, one needs to find degenerations of codimension $1$: \[\partial M_{f} = \bigcup M_{f_1} \times ... M_{f_k} \] over all subdivisions of the frame $f$ into the frames $f_1, ..., f_k$, occuring in the general degeneration of leaf in codimension $1$.

\begin{lemma} Denote by $s$ the number of internal edges in the subdivision. The \textbf{expected codimension formula} states that the expected codimension $C$ of this subdivision coincides with the expected dimension of realizations of the dual graph, namely, it equals \[C = 2k-s-2\]
\end{lemma}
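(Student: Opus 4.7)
The aim is to prove $\dim M_f - \sum_i \dim M_{f_i} = 2k-s-2$. By Lemma 4.13 each expected dimension equals the winding index of the characteristic curve of the corresponding normal bundle, so the claim is equivalent to
\[
I(\gamma_L) - \sum_i I(\gamma_{L_i}) = 2k-s-2,
\]
where $\gamma_L$ (resp.\ $\gamma_{L_i}$) is the characteristic curve of the normal bundle of the undegenerated leaf $L$ (resp.\ of the piece $L_i$). A parallel, more conceptual route is a direct deformation-theoretic count via web realizations. I will outline both.

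Direct route: near the degenerate leaf $L=L_1\cup\cdots\cup L_k$, a nearby smooth leaf $L'\in M_f$ is specified by (i) a choice of nearby pieces $L_i'\in M_{f_i}$, contributing $\sum_i \dim M_{f_i}$ parameters, and (ii) gluing data describing how to assemble the pieces into a single smooth surface. As in the web-limit construction used in the proof of Lemma 5.9, the gluing data is a realization of the dual graph (with directions) in $\mathbb{R}^2$; by Definition 5.5 this space has expected dimension $2k-s-2$ (translations modded out, while the overall dilation is kept as a genuine modulus measuring the "distance" from the degenerate boundary stratum). This yields the codimension formula immediately.

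Characteristic-curve route: use Note 4.14 (additivity on short exact sequences of equivariant bundles) to relate $N_L$ and $\bigoplus_i N_{L_i}$: they agree on the interiors of the pieces, so $I(\gamma_L) - \sum I(\gamma_{L_i})$ picks up contributions only from internal edges and internal vertices of the subdivision. Expand each index using Lemma 4.12 as turning number minus $\sum a + \sum b$, and tally the contributions: external-edge segments appear once in $\gamma_L$ and once in a single $\gamma_{L_i}$ and cancel; internal-edge segments appear twice in $\sum_i \gamma_{L_i}$ with opposite orientations and cancel; the remaining corner corrections live at internal vertices of the subdivision, where the planar Euler relation $V_{\mathrm{int}}=s-k+1$ collects them into the desired $2k-s-2$. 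The main obstacle on this route is bookkeeping at a shared vertex $p$: the characters of $N_{L_i}$ at $p$ depend on which two edges of the subdivision bound $f_i$ at $p$, so the "cancellation at internal edges" is a shorthand that is genuinely incorrect at the endpoints — the $b$-terms from Lemma 4.12 at internal vertices must be tracked precisely for each incident sub-frame. The direct route sidesteps this combinatorics at the cost of requiring a rigorous justification that every web realization of the dual graph indeed arises from a genuine smooth nearby leaf, paralleling the analytical argument from [GMW] invoked in Lemma 5.9.
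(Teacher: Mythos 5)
Your second route is the one the paper actually takes, but you stop just short of the step that makes it work, and you candidly admit the resulting bookkeeping at internal vertices defeats you. The paper's trick is to never compare the normal sheaves directly: it uses the exact sequence relating the tangent sheaf of the leaf, the restriction $TM|_{\text{leaf}}$ of the ambient tangent bundle, and the normal sheaf, together with Note 4.14. The middle term $TM|_{\text{leaf}}$ is the restriction of a single global equivariant bundle, so its characteristic curve is genuinely additive as a $1$-chain over the decomposition $L = L_1 \cup \dots \cup L_k$ — no vertex corrections at all. Hence the jump in $\chi(N)$ equals minus the jump in $\chi(T_{\text{leaf}})$, and the whole computation reduces to the tangent sheaves of the toric domains themselves, whose invariant Euler characteristics are the expected dimensions of their automorphism groups: $2$ for each piece, a correction of $1$ for each of the $s$ internal gluing edges, and $2$ for the original leaf, giving $2k - s - 2$. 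This is precisely the reduction that eliminates the per-vertex tracking of $a_i$ and $b_i$ terms from Lemma 4.12 that you correctly identify as the obstacle; without it your characteristic-curve route is incomplete as you yourself acknowledge.

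Your first route has a different problem: it is not a proof of the statement as formulated. The lemma asserts an identity between two combinatorially defined \emph{expected} dimensions (invariant Euler characteristics on one side, $2v - e - 2$ for the dual graph on the other), whereas your gluing argument would compute the \emph{actual} dimension of the boundary stratum, contingent on a nontrivial gluing theorem ("every web realization arises from a genuine nearby leaf") that neither you nor the paper establishes. Conflating the two defeats the purpose of working with expected dimensions, which is exactly to avoid such transversality and gluing statements; the identification of actual with expected dimension is a separate standing hypothesis (Assumption 4.11), not something this lemma may presuppose.
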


\begin{proof} One needs to calculate the difference between invariant Euler characteristic of the normal sheaf of $l$ and invariant Euler characteristic of the union of $l_1, ..., l_k$. It is clear that invariant Euler characteristic of the restriction of $TM$ is additive in that regard: the characteristic curve of $TM|_l$ equals the sum of characteristic curves of $TM|_{l_i}$'s as the sum of $1$-chains. Hence, one needs to calculate the jump of the invariant Euler characteristics of tangent sheaves, which is interpreted as the expected dimension of the automorphism group.
\end{proof}

\begin{note} One can notice that this formula gives $0$ or even negative values for some degenerations. At the initial stages of this work we thought this must be attributed to the existence of the second cohomology of the normal sheaf, but now we know the examples where it is not related. The corresponding webs never occur under assumption 5.1. The general case is interesting, its geometry in the complex case is discussed in [A].
\end{note}

\begin{lemma} Consider the set of operations $l_k: F \otimes \underset{\text{k times}}{ ... }\otimes F \rightarrow F$ defined as a sum over all decomposition of a frame $F$ into $k$ pieces in such a way that the corresponding web exists and expected codimension is $1$. These operations satisfy $L_\infty$ properties, and so $F$ is an $L_\infty$-algebra. \end{lemma}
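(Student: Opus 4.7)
The plan is to derive the $L_\infty$ relations from a standard boundary-of-boundary argument on the moduli spaces $M_f$, directly parallel to the classical Morse-theoretic derivations of $A_\infty$ and $L_\infty$ structures.

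First, I would reinterpret each term in the quadratic $L_\infty$ identity, applied to inputs $(f_1, \dots, f_n)$, as counting a ``nested codimension-$1$ subdivision'' of the output frame $f$: an outer codim-$1$ subdivision of $f$ into $j$ frames, together with a codim-$1$ subdivision of one of those $j$ frames into $i$ further pieces, so that the resulting $n = i + j - 1$ terminal frames match a shuffle of $(f_1, \dots, f_n)$. Applying the expected codimension formula additively to the inner and outer subdivisions shows that the combined decomposition of $f$ has total expected codimension $2$; thus each summand of the $L_\infty$ identity is naturally indexed by a codim-$2$ subdivision of $f$ together with the extra datum of a distinguished internal edge --- the one whose collapse produces the intermediate codim-$1$ web.

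Next, I would argue that under Assumption 5.1 the moduli space $M_f$ carries the structure of a compact manifold with corners, whose codim-$1$ faces are labelled by codim-$1$ realizable subdivisions (exactly the configurations counted by $l_k$) and whose codim-$2$ corners are labelled by codim-$2$ realizable subdivisions, with realizability supplied by the preceding semi-realizability lemmas. Each codim-$2$ corner is locally a product $M_{g_1} \times \cdots \times M_{g_n}$ and lies in the closure of exactly two codim-$1$ faces, corresponding to the two ways of ``un-shrinking'' one of the two contractible internal edges of the associated web. Summing the boundary contributions over all codim-$1$ faces and observing that each codim-$2$ corner contributes to two codim-$1$ faces with opposite induced orientations then yields the $L_\infty$ identity by Stokes for manifolds with corners.

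The step I expect to be the main obstacle is the sign calculation. Signs enter from three distinct sources: orientations on the moduli spaces $M_f$, which must be defined canonically using the characteristic curve of the normal bundle of a leaf (as set up in the discussion around the invariant Euler characteristic lemma); the Koszul and shuffle signs from the graded symmetric algebra on $F$; and the orientations of the normal directions to each codim-$2$ corner within its two adjacent codim-$1$ faces. Checking that these three sources combine to produce precisely the standard $L_\infty$ signs is the bulk of the argument, and I would model it on the parallel sign computations in [KKS] and [GMW].
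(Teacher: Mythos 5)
The paper does not prove this lemma itself; it cites [GMW] (``the web differential squares to zero'') and [KKS]. Your second paragraph reconstructs the right combinatorial skeleton of that argument: each composite $l_j(l_i(\cdot),\cdot,\dots)$ is indexed by an expected-codimension-$2$ subdivision of the output frame together with a distinguished intermediate coarsening, and the identity amounts to showing that each codimension-$2$ subdivision contributes exactly two such intermediate coarsenings, with opposite signs.

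The genuine gap is in your third paragraph, where you try to establish that cancellation by a Stokes argument on the moduli spaces $M_f$ of leaves. This is the wrong space, for two reasons. First, the operations $l_k$ are defined purely combinatorially --- a decomposition contributes whenever the corresponding web \emph{exists} and has expected codimension $1$, regardless of whether any leaves realize it; $M_f$ may well be empty while the web is realizable, so its corner structure cannot control the identity. (The Stokes-on-$M_f$ argument is precisely the proof of the \emph{next} lemma, that $\phi$ is a Maurer--Cartan element; conflating the two obscures the fact that the $L_\infty$ relations are a formal property of webs that must hold before one ever looks at leaves.) Second, the key counting fact is asserted rather than proved: you say each codimension-$2$ corner lies in the closure of exactly two codimension-$1$ faces ``corresponding to the two contractible internal edges,'' but a codimension-$2$ web can have arbitrarily many internal edges, and the two degenerations do not in general arise from collapsing single edges. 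The correct source of the ``exactly two'' is the space of realizations of the dual graph with directions: for an expected-codimension-$2$ graph this is (under Assumption 5.1 and the realizability lemmas) a $2$-dimensional polyhedral cone modulo translations, hence an interval after further quotienting by dilation, and its two endpoints are the two degenerations into two-level webs giving the two cancelling terms. Replacing the $M_f$ corner analysis by this analysis of the realization cone --- which is exactly the content of the cited web-differential computation in [GMW] --- closes the gap; the sign bookkeeping you flag in the last paragraph then proceeds as in [KKS].
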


\begin{proof} The proof is contained in [GMW] (web differential square is zero) and recast on the dual language in [KKS].
\end{proof}

\begin{lemma} Consider an element $\phi \in F$ defined by analogy to the $1$-dimensional case - counting the number of leaves passing through the frame if it is discrete. It is a Maurer-Cartan element.
\end{lemma}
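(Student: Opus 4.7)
The plan is to realize $\phi$ as the formal sum $\phi = \sum_f [M_f]$ of fundamental chains of the moduli spaces of leaves over all frames $f$, and then to read off the Maurer--Cartan equation $\partial \phi + \sum_{k \geq 2} \frac{1}{k!} l_k(\phi, \ldots, \phi) = 0$ from the codimension-one boundary stratification of the spaces $M_f$, in direct analogy with the 1-Morse argument at the end of Section 3.

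First I would compute $\partial \phi = \sum_f \partial [M_f]$. Using the boundary formula $\partial M_f = \bigcup M_{f_1} \times \cdots \times M_{f_k}$ recalled just before the expected codimension formula, this expression becomes a sum of products of fundamental chains indexed by those subdivisions of $f$ which occur as codimension-one degenerations. By the lemma identifying genuine degenerations with realizable webs, combined with the semi-realizability lemma and Assumption 5.1, the only subdivisions that actually arise are those whose dual graph with directions is realizable; and by the expected codimension formula $C = 2k - s - 2$, the codimension-one strata are precisely the realizable subdivisions with $2k - s - 2 = 1$.

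Next I would identify these contributions with the higher brackets. By the definition of $l_k$ in the preceding lemma, $l_k(\phi, \ldots, \phi)$ is, up to symmetry, the sum over realizable codimension-one subdivisions of $f$ into $f_1, \ldots, f_k$, with summand given by the product $[M_{f_1}] \times \cdots \times [M_{f_k}]$. Unordering the partition absorbs the $1/k!$ factor, and the equality of the contributions to $\partial [M_f]$ and to $\sum_k \frac{1}{k!} l_k(\phi, \ldots, \phi)$ is then term-by-term.

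The main difficulty is bookkeeping rather than conceptual. Two points require care. First, the signs: the orientation of the stratum $M_{f_1} \times \cdots \times M_{f_k}$ induced from the manifold-with-corners structure on $M_f$ must be compared with the standard Koszul signs of the $L_\infty$ convention --- this is where the cyclic ordering of frames and the chosen orientations of leaves enter, and is handled exactly as in [GMW] and [KKS]. Second, one must certify that the list of codimension-one boundary strata is complete, i.e.\ that no exotic degeneration escapes; here Assumption 5.1 combined with the two preceding lemmas rules out any non-realizable contribution. Once the signs are verified, the Maurer--Cartan equation follows immediately from the decomposition of $\partial [M_f]$.
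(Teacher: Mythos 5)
Your proposal follows essentially the same route as the paper: the paper's proof is a one-line appeal to the analogy with the $1$-dimensional case and the description of the codimension-one boundary of $M_f$, and your argument is precisely a careful expansion of that, using the boundary stratification $\partial M_f = \bigcup M_{f_1}\times\cdots\times M_{f_k}$, the realizability lemmas under Assumption 5.1, the expected codimension formula, and the definition of the brackets $l_k$. The added detail (completeness of the stratum list, sign conventions deferred to [GMW]/[KKS]) is consistent with what the paper implicitly relies on.
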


\begin{proof}Analogous to the $1$-dimensional case, this fact follows from the description of codimension $1$ boundary.\end{proof}

\section{Relation to secondary polytopes / Comparison with [KKS]}

\begin{definition} The subdivision of a polygon is called admissible if there is a piecewise-linear concave function defining it.
\end{definition}

\begin{lemma} Admissibility is equivalent to the existence of the normal web.
\end{lemma}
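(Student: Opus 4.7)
The plan is to set up the obvious dictionary between piecewise-linear concave functions on the polygon and embedded duals with orthogonal edges, and then check that the combinatorial data on one side determines the data on the other.

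First I would prove the forward direction. Given a concave piecewise-linear function $f$ whose domains of linearity realize the subdivision, let $g_i \in \mathbb{R}^2$ denote the gradient of $f$ on the region $R_i$. I would take these $g_i$ as the vertices of the candidate web. On the edge $R_i \cap R_j$ shared by two adjacent regions, continuity of $f$ forces $\langle g_i - g_j, x\rangle$ to be constant, so the vector $g_i - g_j$ is perpendicular to that edge; concavity forces $g_i - g_j$ to point from the $R_j$-side to the $R_i$-side, which is exactly the orientation condition. The convex-fan condition at each vertex of the web follows by looking at a small loop in the subdivision around the corresponding interior vertex of the polygon: the gradients $g_{i_1}, \ldots, g_{i_k}$ traverse the vertices of a convex polygon (because $f$ is concave), and the edges emanate in the directions perpendicular to the edges of the subdivision meeting at that vertex.

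For the reverse direction, given a normal web I take the vertex positions $g_i$ as the prescribed gradients and try to reconstruct constants $c_i$ so that $f|_{R_i}(x) = \langle g_i, x\rangle + c_i$ glues into a well-defined concave function. Continuity along the edge $R_i \cap R_j$ amounts to the single linear equation $c_j - c_i = \langle g_i - g_j, x\rangle$, and this is consistent because the right-hand side is constant on that edge (the edge being perpendicular to $g_i - g_j$ by the normality assumption). I would then fix a base region, propagate the $c_i$ along a spanning tree of the dual graph, and verify consistency around the cycles that generate $H_1$ of the dual. The natural generators of these cycles correspond to interior vertices $v$ of the subdivision: summing $c_{i_{j+1}} - c_{i_j}$ around such a cycle evaluated at $v$ gives $\langle \sum_j (g_{i_j} - g_{i_{j+1}}), v\rangle = 0$, because the sum telescopes. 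Concavity of the resulting $f$ is then equivalent to the orientation of each web edge pointing from the region with the smaller affine function to the one with the larger, which is precisely the convex-fan condition at the vertices of the web.

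The main obstacle I anticipate is the consistency around cycles in the reverse direction: the telescoping argument is only available because each cycle of the dual can be expressed as a loop around a single interior vertex of the subdivision. For a general planar graph this is essentially the statement that the face-cycles generate $H_1$, so I would have to make this explicit by induction on the number of interior edges, or by invoking the standard fact that a closed $1$-form on a simply connected planar region is exact. Once the cocycle condition is reduced to local vertex cycles, the computation is the one-line telescoping above, and the rest (concavity, genericity) is bookkeeping inherited from the orthogonality of the web.
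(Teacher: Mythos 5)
Your proposal is correct and follows essentially the same route as the paper: vertices of the web are the gradients (equivalently, the linear parts of the affine pieces) of the concave function, orthogonality and orientation of web edges correspond to continuity and concavity across subdivision edges, and the reverse direction reduces to reconstructing the additive constants, which the paper dispatches by noting the obstruction lives in $H^1(P,\mathbb{R}) = 0$ --- exactly the cocycle-around-interior-vertices check you spell out via the spanning tree and telescoping sum. Your version is just a more explicit unwinding of the paper's two-line argument.
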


\begin{proof} For the piecewise-linear concave function on the polygon $P$ with vertices in $A$, linear on the subpolygons $P_1$, ..., $P_k$, let us choose vertices of the web in the dual space to be the corresponding linear functions $f_i$'s on these polygons. Directions of the edges are guaranteed by the vanishing of $f_i - f_j$ on the edge between $P_i$ and $P_j$, positivity of the edges follows from concavity of the function.

In the opposite direction, web gives the linear parts of $f_i$'s, and one only needs to add additive constants in such a way to glue them into one piecewise-concave function. This is always possible, because the obstruction lies in a group $H^1(P, \mathbb{R}) = 0$.
 \end{proof}

This also identifies the Lemma 5.8 as the fact from [KKS] that in general position of the set $A$ admissible subdivisions have factorization property.

\begin{proposition} Under the previous conditions the space of critical edges gives rise to the factorization system on the secondary polytope of $A$ (as in [KKS]). The fundamental cycle of the space of leaves gives the Maurer-Cartan element in the corresponding $L_\infty$ algebra $F$.
\end{proposition}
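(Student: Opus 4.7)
The plan is to show that the constructions of Section 5 are literally instances of the [KKS] factorization system on the secondary polytope $\Sigma(A)$, under the dictionary established in Section 6. The key combinatorial input is the combination of Lemma 5.8 and Lemma 6.2: under Assumption 5.1, the realizable webs that arise as duals of codimension-1 degenerations of leaves coincide exactly with the admissible subdivisions that label faces of $\Sigma(A)$. This matches the ``decomposition of a frame'' data of Section 5 on the nose with the face-lattice data of the secondary polytope.

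Given this, the first sentence of the proposition reduces to a translation of definitions. Each frame $f$ projects under $(f_v, f_w)$ to a convex polygon $P_f$ with vertices in $A$, and decompositions of $f$ into sub-frames go to admissible subdivisions of $P_f$. Hence the frame-graded complex $F$ refines the $\Sigma(A)$-graded complex of [KKS], and the partially-defined composition by glueing frames along shared critical edges is exactly their factorization product. The $L_\infty$-operations $l_k$ of Lemma 5.12, defined by summation over expected-codimension-one decompositions (Lemma 5.10), then coincide term by term with the higher products of the [KKS] factorization $L_\infty$-algebra; the $L_\infty$-axioms on both sides trace back to the same source, the vanishing of the square of the web differential of [GMW].

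The second sentence follows from Lemma 5.13 once the identification above is made: the element $\phi = \bigoplus_f [M_f]$ assembled from fundamental cycles satisfies $\partial \phi = \sum_k l_k(\phi, \ldots, \phi)$, which is precisely the Maurer-Cartan equation in the identified $L_\infty$-algebra; under the dictionary this is the interior amplitude of [KKS]. The main delicate point I expect is the verification of signs and orientations between the two pictures: our side uses intersection-theoretic orientations coming from normal-bundle Euler characteristics (Lemmas 4.12--4.13), while [KKS] uses a combinatorial sign rule built into the web differential. I would handle this by reducing the comparison to rigid webs, where the computation is one-dimensional and both sign rules specialize to the same local model of a single codimension-1 transition.
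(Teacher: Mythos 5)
Your proposal matches the paper's own (essentially implicit) argument: the paper offers no separate proof of this Claim, treating it as the assembly of Lemma 5.8 with the dictionary of Lemma 6.2 (admissible subdivisions correspond to normal webs, identifying the factorization property) together with Lemmas 5.12 and 5.13 for the $L_\infty$ structure and the Maurer--Cartan element, which is exactly the route you take. Your additional remark about checking signs and orientations goes beyond what the paper addresses, but the core identification is the same.
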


While we described the structure analogous to the $1$-dimensional Morse theory, it is important to note that $F$ is the analogue of $B = End(V)$, but analogue of $V$ itself is yet to be spotted. From the paper [KKS] we learned \textbf{universality lemma} stating that $F$ can be interpreted as the Hochschild cohomology of the algebra of concave edge-paths, which will be denoted as $F_\curvearrowright$ in the further sections, so it is natural to consider the deformation of $F_\curvearrowright$ by the Maurer-Cartan element. So, $F_\curvearrowright$ is our candidate for the analogue of $V$, and its deformation is our candidate for the analogue of the Morse complex. We discuss possible geometric interpretation of these objects in the next part.

\section{Possible interpretation of universality lemma}

This part is an informal discussion on the meaning of the algebra of concave paths $F_\curvearrowright$ and its deformation.

\begin{definition} We denote by $F_\curvearrowright$ the algebra of edge-paths which are concave on the polarization plane. The multiplication is composition if the resulting path is concave and zero otherwise. It also to be endowed with some natural grading which we will not discuss explicitly.
\end{definition}

\begin{lemma}[Follows from universality lemma] The oriented bar- \linebreak construction of $F_\curvearrowright$ is an algebra of convex paths, defined in the same fashion. We denote it as $Bar(F_\curvearrowright)$
\end{lemma}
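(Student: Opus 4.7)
The plan is to exhibit an explicit isomorphism of dg-algebras between $Bar(F_\curvearrowright)$ and the algebra of convex edge-paths (defined by convex concatenation, zero otherwise, in analogy with Definition 7.1). First, I unpack the oriented bar construction as the tensor algebra $T(F_\curvearrowright[1]) = \bigoplus_n F_\curvearrowright^{\otimes n}$ equipped with concatenation as its product and bar differential
\[
d(c_1 \otimes \cdots \otimes c_n) = \sum_{i} (-1)^{\epsilon_i}\, c_1 \otimes \cdots \otimes (c_i \ast c_{i+1}) \otimes \cdots \otimes c_n,
\]
where $\ast$ denotes concave composition in $F_\curvearrowright$ (which vanishes unless the junction $c_i \to c_{i+1}$ is a concave turn).

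The key geometric observation is that a pure tensor $(c_1,\ldots,c_n)$, read as the concatenated edge-path, has each internal junction either concave or convex, and the subspace of tensors with all junctions convex is in canonical bijection with convex edge-paths via the unique decomposition of a convex path into its maximal concave blocks. The bar differential strictly decreases the number of concave junctions by merging at each step, so this all-convex-junction subspace is a deformation retract, and the resulting cohomology is precisely the span of convex paths with zero induced differential. The concatenation product of tensors translates to concatenation of convex paths: when the meeting junction $c_n \to d_1$ is convex, the product is again an all-convex-junction tensor; when it is concave, iterated application of $d$ merges and reduces, either producing a convex path or vanishing, matching the convex-concatenation rule.

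The universality lemma of [KKS] supplies the abstract framework for this calculation, certifies naturality, and fixes the sign conventions. The main obstacle is the bookkeeping of signs and gradings, together with the precise sense in which the "oriented" qualifier promotes the standard dg-coalgebra bar construction to a dg-algebra on convex paths — this amounts to equipping $T(F_\curvearrowright[1])$ with its concatenation product alongside the bar differential and verifying that the induced product on cohomology is well-defined and agrees with convex concatenation. Once this framework is in place, the geometric bijection between convex paths and maximal concave decompositions carries out the bulk of the argument.
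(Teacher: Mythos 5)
Your proposal is correct and follows essentially the same route as the paper: both decompose the bar complex into summands indexed by the underlying concatenated edge-path and observe that each summand is acyclic unless that path is convex with a tensor break at every internal vertex, in which case it contributes the one-dimensional span of that convex path, with the induced product being convex concatenation. The one imprecision is the claim that ``the bar differential strictly decreases the number of concave junctions'' --- the underlying path, and hence its set of concave junctions, is preserved by the differential; the acyclicity of a summand possessing a concave junction comes instead from the contracting homotopy that inserts a break at a chosen concave junction (equivalently, the summand is a tensor factor of acyclic two-term complexes).
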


\begin{proof}
Consider the oriented bar-construction of $F_\curvearrowright$. It is an algebra of edge paths, going from left to right on the polarization plane and subdivided into concave pieces by the $\otimes$ symbol located in some internal vertices. The differential is a sum over removals of one $\otimes$ symbol. It can easily be seen that the subcomplex corresponding to the single edge-path is acyclic unless the pass is convex and $\otimes$ symbols are arranged on its every vertex.
\end{proof}

Now assume that the horizontal vector field $w$ is general in the sense of Morse theory (\textit{this assumption is almost never satisfied and this is the reason why we consider this more as a plausible explanation than a theorem}). Then one might try to construct the finite-dimensional model of the trajectories dg-algebra $B$ using the Morse theory of the vector field $w$ on the space of trajectories. The set of critical points of $w$ on the space of trajectories is easily identified with the set of edge paths going from the left to the right. Now one needs to remember that the space of trajectories is not a manifold but a manifold with angles, so the critical points on the boundary will be endowed with the complex which models the $1$-point compactification of the downward flow gradient variety. Inspection shows that the downward flow gradient variety of the edge-path going from the left to the right is of the form $\mathbb{R}^k \times \mathbb{R}_{>0}^{s_1} \times ... \times \mathbb{R}_{>0}^{s_n}$, where $s_1, ..., s_n$ are the lengths of concave parts of the path minus one. So, the bar construction complex is actually a model of the critical points on the space of trajectories, and its deformation should correspond to the Morse complex model of the space of trajectories.

\begin{conjecture}
$F_{\curvearrowright}$ deformed by Maurer-Cartan element is a model of dg-algebra $B$ of trajectories of the vector field $v$.
\end{conjecture}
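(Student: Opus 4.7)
The plan is to upgrade the informal argument given just before the statement into an actual proof by carrying out Morse theory of $w$ on the space of $v$-trajectories. The first step is to set up the auxiliary Morse problem. Recall that $B$ is built from chains on the spaces $M_{ij}$ of $v$-trajectories between critical points; since $v$ and $w$ commute, the $w$-flow acts on every $M_{ij}$, and on each $M_{ij}$ it should itself be gradient-like for the induced function $f_w$. I would first verify this, and then identify the critical set of $w|_{M_{ij}}$ with the set of \emph{edge paths} from $i$ to $j$: at a fixed point of $w$ on $M_{ij}$ both $v$ and $w$ are tangent to the orbit, so the $\langle v,w\rangle$-orbit is one-dimensional, i.e.\ a critical edge, and the whole trajectory is a concatenation of such edges.

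Next I would describe the local geometry around an edge path $\gamma=(e_1,\dots,e_n)$ broken into $n$ maximal concave runs of lengths $s_1+1,\dots,s_n+1$. As the author sketches, the unstable manifold of $w$ at $\gamma$ is modeled by $\mathbb{R}^{k}\times\mathbb{R}_{>0}^{s_1}\times\dots\times\mathbb{R}_{>0}^{s_n}$: each half-line factor corresponds to the freedom of sliding a concave break out of the corner stratum of $M_{ij}$. The key computation is to match the one-point compactification of this local model, viewed as a cellular chain complex at the Morse critical set, with the direct summand of $\mathrm{Bar}(F_\curvearrowright)$ indexed by $\gamma$; this is essentially Lemma~7.2 together with the grading shift produced by the $s_j$ concave runs. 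Summing over all $\gamma$ gives, as vector spaces, an isomorphism between the would-be Morse complex of $w$ on $\bigoplus_{i<j}M_{ij}$ and $\mathrm{Bar}(F_\curvearrowright)$, which by the bar-cobar duality is a model of $F_\curvearrowright$.

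The third step is to match differentials. The Morse differential for $w$ on $M_{ij}$ counts codimension one broken $w$-trajectories. These come in two flavours: interior breakings, where the Morse trajectory limits to an intermediate edge path, and boundary breakings, where the trajectory exits through $\partial M_{ij}$. Interior breakings produce exactly the composition-of-concave-pieces differential of $\mathrm{Bar}(F_\curvearrowright)$. Boundary breakings, by the Morse-theoretic description of $\partial M_{ij}$ in terms of leaves with frame boundary developed in Sections~4 and~5, pick up precisely the components of the Maurer-Cartan element $\phi \in F$ glued into edge paths via the $L_\infty$-action on $F_\curvearrowright$ coming from the universality lemma. Together these two contributions recover the differential of the $\phi$-deformed $F_\curvearrowright$, and iterated boundary degenerations reproduce the higher $L_\infty$ corrections, which gives the desired quasi-isomorphism with $B$.

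The main obstacle is twofold. First, as the author explicitly warns, the assumption that $w|_{M_{ij}}$ be a genuine Morse function is almost never satisfied: the spaces $M_{ij}$ can be non-transverse, and the critical loci of $w$ on them are typically positive dimensional, so a rigorous proof would have to be run in a Morse-Bott framework on manifolds with corners, or else rely on a virtual chain replacement of $M_{ij}$, neither of which is developed in this paper. Second, even granting the Morse-theoretic technology, the precise identification of codimension one boundary strata of $M_{ij}$ with concavity-admissible breaks of an edge path, and in particular the matching of their multiplicities with the coefficients of the Maurer-Cartan element constructed in Section~5, relies on the full web calculus of [GMW], [KKS]; making this bookkeeping rigorous is the technical heart of the argument, and is the reason the statement is recorded only as a hypothesis rather than a theorem.
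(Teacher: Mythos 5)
Your outline reproduces, in somewhat more detail, exactly the heuristic the paper itself gives in Section 7: run Morse theory for $w$ on the spaces $M_{ij}$ of $v$-trajectories, identify the critical set with edge paths, match the local models $\mathbb{R}^{k}\times\mathbb{R}_{>0}^{s_1}\times\dots\times\mathbb{R}_{>0}^{s_n}$ with the summands of $\mathrm{Bar}(F_\curvearrowright)$ via Lemma 7.2, and read off the deformed differential from interior versus boundary breakings. So there is no divergence of approach to report. But note that the paper offers no proof of this statement at all: it is deliberately recorded as a hypothesis, and the surrounding section is explicitly labelled an informal discussion, with the caveat that the genericity assumption on $w$ is almost never satisfied.

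The two obstacles you name are genuine and are precisely why the statement cannot currently be promoted to a theorem. First, the failure of $w$ to be Morse on the trajectory spaces is not a removable technicality in this framework: Example 8.1 shows that for the real toric surface with a pentagonal fan every linear combination $av+bw$ has a saddle-to-saddle connection on each two-dimensional orbit, so no perturbation within the commuting pair restores transversality, and one would have to import a Morse--Bott formalism on manifolds with corners, or a virtual-chain replacement of the $M_{ij}$, neither of which the paper develops. Second, the matching of codimension-one boundary strata of $M_{ij}$ with Maurer--Cartan insertions, including multiplicities and signs, is exactly the content that would require the full web calculus of [GMW] and [KKS] together with the assumptions of Sections 4--5, and nothing in the paper certifies that those assumptions are simultaneously satisfiable with Morse genericity of $w$. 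Your proposal is therefore a correct restatement of the intended strategy and an accurate diagnosis of why it is incomplete, but it is not a proof, and the statement should remain a conjecture.
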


\section{Questions and examples}

There are multiple issues with this model, the first and foremost being the lack of general position argument. We know no way of setting two given commuting vector fields in the position general enough for these constructions to hold. While for some assumptions it seems possible to achieve this, it is definitely not possible for the last argument because of the following no-go example:

\begin{example}
Consider the real toric variety with normal toric fan with $5$ edges. It is topologically the connected sum of $3$ copies of $\mathbb{RP}^2$. Any $2$-dimensional orbit is a $5$-gon. Linear combination of the fields $av + bw$ on a $5$-gon will have $1$ source point, $1$ sink point and $3$ saddle points. Hence, any such linear combination will have a gradient trajectory between two saddle points, which means they are not in general position in a sense of Morse theory.
\end{example}

Many assumptions of the model might be relaxed. For example, it should be interesting to consider algebraic varieties with $(\mathbb{C}^*)^2$-action, for which the space of critical edges will not be discrete anymore. It will be the kind of $2$-Morse-Bott theory.

We conclude with two examples, defined for any set $A$. For an $n+1$-point set $A$ one can define a diagonal action of $\mathbb{R}^2$ on an $n+1$-dimensional vector space: action on the $i$'th coordinate is $s_i \rightarrow e^{x_i t+y_i w} s_i$. This allows to construct an action on $\mathbb{RP}^n$ and $\mathbb{CP}^n$, second is slightly out of scope of the previous exposition, but the same algebraic structure is present.

\begin{example}
For $\mathbb{RP}^n$, there are $2$ critical trajectories for any pair of critical points $p_i$ and $p_j$ (corresponding to the two ways of connecting points along $\mathbb{RP}^1$). We denote them as $c_{ij}^{0}$ and $c_{ij}^{1}$, the first one is the one allowing the preimage with non-negative coordinates.

Then, the Maurer-Cartan element is the following: for any triangle without internal points $(i, j, k)$ it is
\[\Phi_{ijk} = \sum_{x+y+z \divby 2} c_{ij}^x c_{jk}^y c_{ki}^z\]
and vanishes in all other cases.
\end{example}

\begin{example}
For $\mathbb{CP}^n$, there are $S^1$-family of critical trajectories for any pair of critical points $p_i$ and $p_j$ (corresponding to the ways of connecting $0$ and $\infty$ along $\mathbb{CP}^1$). Thus, the homology space of this family is $1+1$-dimensional, we still denote the basis as $c_{ij}^{0}$ and $c_{ij}^{1}$. It is also endowed with (odd) Poincare pairing.

Then, the Maurer-Cartan element is the following: for any triangle without internal points $(i, j, k)$ it is
\[\Phi_{ijk} = \sum_{x+y+z = 2} c_{ij}^x c_{jk}^y c_{ki}^z\]
and vanishes in all other cases.
\end{example}

Both of these examples are fairly simple. They seem to also satisfy categorical wall-crossing formula from [GMW]. We currently aim to construct more elaborate examples from the enumerative geometry of projective spaces.

\end{document}